\newtheorem{theorem}{Theorem}
\newtheorem{lemma}[theorem]{Lemma}
\newcommand{\ie}{i.e.\xspace}
\title{From edge-disjoint paths to independent paths}
\author{Serge Gaspers\footnote{%
  Institute of Information Systems,
  Vienna University of Technology,
  \texttt{gaspers@kr.tuwien.ac.at}\newline
  Research supported by the European Research Council
  (COMPLEX REASON, 239962).
}}
\date{}
\begin{document}
 \maketitle

\begin{abstract}
Let $f(k)$ denote the maximum such that every simple undirected graph containing two vertices $s,t$ and $k$ edge-disjoint
$s$--$t$ paths, also contains two vertices $u,v$ and $f(k)$ independent $u$--$v$ paths. Here, a set of paths
is \emph{independent} if none of them contains an interior vertex of another. We prove that
\begin{align*}
 f(k) = \begin{cases}
         k & \text{ if $k \le 2$, and}\\
         3 & \text{ otherwise.}
        \end{cases}
\end{align*}
\end{abstract}

\bigskip

\noindent
Since independent paths are edge-disjoint, it is clear that $f(k)\le k$ for every positive integer $k$.

Let $\mathcal{P}$ be a set of edge-disjoint $s$--$t$ paths in a graph $G$. Clearly, if $|\mathcal{P}|\le 1$, then
the paths in $\mathcal{P}$ are independent. If $\mathcal{P}=\{ P_1,P_2 \}$, a set of two independent $u$--$v$ paths can
easily be obtained as follows. Set $u:=s$ and let $v$ be the vertex that belongs to both $P_1$ and $P_2$ and is closest to $s$ on $P_1$.
Then, the $u$--$v$ subpaths of $P_1$ and $P_2$ are independent. This proves that $f(k)=k$ if $k\le 2$.

\smallskip

The lower bound for $f(k), k\ge 3$, is provided by the following lemma.

\begin{lemma}\label{lem:lb}
 Let $G=(V,E)$ be a graph. If there are two vertices $s,t\in V$ with 3
edge-disjoint $s$--$t$ paths in $G$,
  then there are two vertices $u,v\in V$ with 3 independent $u$--$v$
paths in $G$.
\end{lemma}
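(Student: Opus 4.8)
The plan is to show that $G$ must contain a subdivision of the theta graph $\Theta$ --- two vertices joined by three internally disjoint paths --- since such a subdivision is exactly a set of three independent $u$--$v$ paths between its branch vertices $u,v$. Equivalently, I will argue that the component of $G$ containing $s$ and $t$ cannot be a \emph{cactus} (a graph in which every block is a single edge or a cycle), on the grounds that cacti are too sparsely connected to carry three edge-disjoint paths between any pair of vertices.

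First I would invoke the block--cut-tree decomposition of the component of $G$ containing $s$ and $t$. Because the blocks and cut vertices form a tree, every $s$--$t$ path traverses the same sequence of blocks $B_1,\dots,B_m$, entering and leaving each $B_i$ at a fixed pair of cut vertices $c_{i-1},c_i$ (with $c_0=s$ and $c_m=t$); in the degenerate case $m=1$ all three paths live inside the single block containing both $s$ and $t$. Hence three edge-disjoint $s$--$t$ paths restrict to three edge-disjoint $c_{i-1}$--$c_i$ subpaths inside each $B_i$.

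The key step is the counting bound. Inside a block that is a single edge there is at most one such subpath, and inside a block that is a cycle there are at most two edge-disjoint subpaths between any two of its vertices. Therefore, if every $B_i$ were an edge or a cycle there would be at most two edge-disjoint $s$--$t$ paths, contradicting the hypothesis. So some block $B$ is neither an edge nor a cycle; being a block with at least three vertices it is $2$-connected, and not being a cycle it is not the case that all its vertices have degree two.

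Finally I would extract the theta subdivision from $B$. Taking an open ear decomposition of the $2$-connected graph $B$ starting from a cycle $C$, the fact that $B$ is not itself a cycle guarantees at least one further ear, i.e.\ a path $R$ whose endpoints $p,q$ lie on $C$ and whose interior avoids $C$. The two arcs of $C$ between $p$ and $q$, together with $R$, form three internally disjoint $p$--$q$ paths, which are three independent $p$--$q$ paths in $G$; note $p,q$ need not be $s,t$, consistent with the statement allowing a different pair $u,v$. I expect the main obstacle to be making the bottleneck count fully rigorous --- in particular, verifying that each of the three edge-disjoint paths meets each block in a single contiguous subpath, so that the ``at most two'' cap for cycle blocks genuinely applies, and confirming that a simple $s$--$t$ path cannot leave and re-enter a pendant block. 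The passage from a non-cycle $2$-connected block to a theta subdivision is standard via ear decomposition, so I would treat it briefly.
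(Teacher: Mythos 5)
Your proof is correct, but it takes a genuinely different route from the paper. The paper's argument is short, elementary, and constructive: it deletes $s$, takes a spanning tree $T$ of the component of $G\setminus\{s\}$ containing $t$ (which contains the three neighbors $s_1,s_2,s_3$ of $s$ on the given paths), picks $v$ to be the tree-median of $s_1,s_2,s_3$ (the common vertex of the pairwise tree paths), and sets $u:=s$; the edge $ss_i$ followed by the $s_i$--$v$ path in $T$ gives the three independent paths explicitly. Your argument instead goes through structure theory: the block--cut tree forces the three edge-disjoint $s$--$t$ paths to cross the same cut vertices, a counting bound (capacity $1$ for bridge blocks, $2$ for cycle blocks) shows some block on the $s$--$t$ route is $2$-connected and not a cycle, and an open ear decomposition of that block yields a theta subdivision, i.e.\ three independent paths. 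The two standard facts you flag as obstacles --- that a simple path meets each block in a single contiguous subpath between the designated cut vertices, and that every $2$-connected non-cycle graph contains a theta subdivision --- are both genuinely standard, so your plan closes. What your route buys is a sharper structural statement: you effectively prove that a connected graph admits $3$ edge-disjoint paths between some pair of vertices only if it is not a cactus, and conversely that any non-cactus block contains the desired theta; this characterization is more informative than the lemma itself. What it costs is machinery (block decomposition plus Whitney's ear decomposition theorem) and length, whereas the paper's spanning-tree median argument needs nothing beyond connectivity and additionally pins down one of the two hub vertices as $s$ itself, which is exactly the form used in the paper's application to backdoor set detection.
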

\begin{proof}
Let $P_1,P_2,P_3$ denote 3 edge-disjoint $s$--$t$ paths,
and let $S=\{s_1,s_2,s_3\}$, where $s_i$ neighbors $s$ on $P_i$, $1\le i\le 3$.
Consider the connected component $G'$ of $G\setminus \{s\}$ containing $t$.
Then, $G'$ contains all vertices from $S$.
Let $T$ be a spanning tree of $G'$.
Select $v$ such that the $s_i$--$v$ subpaths of $T$, $1\le i\le 3$, are independent.
This vertex $v$ belongs to every subpath of $T$ that has
two vertices from $S$ as endpoints.
To see that this vertex exists, consider the $s_1$--$s_3$ subpath $P_{1,3}$ of $T$
and the $s_2$--$s_3$ subpath $P_{2,3}$ of $T$.
Set $v$ to be the vertex that belongs to both $P_{1,3}$ and $P_{2,3}$ and is closest to $s_2$ on $P_{2,3}$
(if $P_{1,3}$ contains $s_2$, then $v=s_2$).
Set $u:=s$, and obtain 3 independent $u$--$v$ paths in $G$ by moving
from $u$ to $s_i$, and then along the $s_i$--$v$ subpath of $T$ to $v$,
$1\le i\le 3$.
\end{proof}

For the upper bound, consider the following family of graphs, the \emph{recursive diamond graphs}~\cite{GuptaNRS04}.
The recursive diamond graph of order $0$ is $G_0=(\{s,t\}, \{st\})$,
and the diamond graph $G_p$ of order $p\ge 1$ is obtained from $G_{p-1}$
by replacing each edge $e=xy$ by the set of edges $\{x p_e, p_e y, x q_e, q_e y\}$, where
$p_e$ and $q_e$ are new vertices.
See Figure \ref{fig} for an illustration.

\tikzset{var/.style={inner sep=.075em,circle,fill=black,draw},
         clause/.style={minimum size=1mm,rectangle,fill=white,draw},
         label distance=-1pt}

\begin{figure}[t]
 \centering
  \begin{tikzpicture}[scale=0.5]
   \node (s) at (0,0) [var,label=below:$s$] {};
   \node (t) at (0,4) [var,label=above:$t$] {};
   \draw (s)--(t);

   \node at (-0.5,2) {$G_0$};
  \end{tikzpicture}\hspace{0.5cm}
  \begin{tikzpicture}[scale=0.75]
   \node (s) at (0,0) [var,label=below:$s$] {};
   \node (t) at (0,4) [var,label=above:$t$] {};
   \node (p) at (-2,2) [var] {};
   \node (q) at (2,2) [var] {};

   \draw (s)--(p)--(t) (s)--(q)--(t);
   \node at (0,2) {$G_1$};
  \end{tikzpicture}\hspace{0.5cm}
  \begin{tikzpicture}[scale=0.875]
   \node (s) at (0,0) [var,label=below:$s$] {};
   \node (t) at (0,4) [var,label=above:$t$] {};
   \node (p) at (-2,2) [var] {};
   \node (q) at (2,2) [var] {};
   \node (p1) at (-0.7,1.3) [var] {};
   \node (p2) at (-1.3,0.7) [var] {};
   \node (p3) at (-0.7,2.7) [var] {};
   \node (p4) at (-1.3,3.3) [var] {};
   \node (q1) at (0.7,1.3) [var] {};
   \node (q2) at (1.3,0.7) [var] {};
   \node (q3) at (0.7,2.7) [var] {};
   \node (q4) at (1.3,3.3) [var] {};

   \draw (s)--(p1)--(p) (s)--(p2)--(p)--(p3)--(t) (p)--(p4)--(t)
   (s)--(q1)--(q) (s)--(q2)--(q)--(q3)--(t) (q)--(q4)--(t);
   \node at (0,2) {$G_2$};
  \end{tikzpicture}\hspace{0.5cm}
  \begin{tikzpicture}
   \node (s) at (0,0) [var,label=below:$s$] {};
   \node (t) at (0,4) [var,label=above:$t$] {};
   \node (p) at (-2,2) [var] {};
   \node (q) at (2,2) [var] {};
   \node (p1) at (-0.7,1.3) [var] {};
   \node (p2) at (-1.3,0.7) [var] {};
   \node (p3) at (-0.7,2.7) [var] {};
   \node (p4) at (-1.3,3.3) [var] {};
   \node (q1) at (0.7,1.3) [var] {};
   \node (q2) at (1.3,0.7) [var] {};
   \node (q3) at (0.7,2.7) [var] {};
   \node (q4) at (1.3,3.3) [var] {};

   \node (q11) at (0.25,0.7) [var] {};
   \node (q12) at (0.45,0.6) [var] {};
   \node (q13) at (1.3,1.75) [var] {};
   \node (q14) at (1.4,1.55) [var] {};
   \node (q21) at (0.6,0.45) [var] {};
   \node (q22) at (0.7,0.25) [var] {};
   \node (q23) at (1.55,1.4) [var] {};
   \node (q24) at (1.75,1.3) [var] {};

   \node (q31) at (0.25,3.3) [var] {};
   \node (q32) at (0.45,3.4) [var] {};
   \node (q33) at (1.3,2.25) [var] {};
   \node (q34) at (1.4,2.45) [var] {};
   \node (q41) at (0.6,3.55) [var] {};
   \node (q42) at (0.7,3.75) [var] {};
   \node (q43) at (1.55,2.6) [var] {};
   \node (q44) at (1.75,2.7) [var] {};

   \node (p11) at (-0.25,0.7) [var] {};
   \node (p12) at (-0.45,0.6) [var] {};
   \node (p13) at (-1.3,1.75) [var] {};
   \node (p14) at (-1.4,1.55) [var] {};
   \node (p21) at (-0.6,0.45) [var] {};
   \node (p22) at (-0.7,0.25) [var] {};
   \node (p23) at (-1.55,1.4) [var] {};
   \node (p24) at (-1.75,1.3) [var] {};

   \node (p31) at (-0.25,3.3) [var] {};
   \node (p32) at (-0.45,3.4) [var] {};
   \node (p33) at (-1.3,2.25) [var] {};
   \node (p34) at (-1.4,2.45) [var] {};
   \node (p41) at (-0.6,3.55) [var] {};
   \node (p42) at (-0.7,3.75) [var] {};
   \node (p43) at (-1.55,2.6) [var] {};
   \node (p44) at (-1.75,2.7) [var] {};

   \draw (s)--(p11)--(p1) (s)--(p12)--(p1)--(p13)--(p) (p1)--(p14)--(p) (s)--(p21)--(p2)
   (s)--(p22)--(p2)--(p23)--(p) (p2)--(p24)--(p)--(p33)--(p3) (p)--(p34)--(p3)--(p31)--(t)
   (p3)--(p32)--(t) (p)--(p43)--(p4) (p)--(p44)--(p4)--(p41)--(t) (p4)--(p42)--(t)
   (s)--(q11)--(q1) (s)--(q12)--(q1)--(q13)--(q) (q1)--(q14)--(q) (s)--(q21)--(q2)
   (s)--(q22)--(q2)--(q23)--(q) (q2)--(q24)--(q)--(q33)--(q3) (q)--(q34)--(q3)--(q31)--(t)
   (q3)--(q32)--(t) (q)--(q43)--(q4) (q)--(q44)--(q4)--(q41)--(t) (q4)--(q42)--(t);
   \node at (0,2) {$G_3$};
  \end{tikzpicture}
 \caption{The recursive diamond graphs of order $0,1,2$, and $3$.}
 \label{fig}
\end{figure}
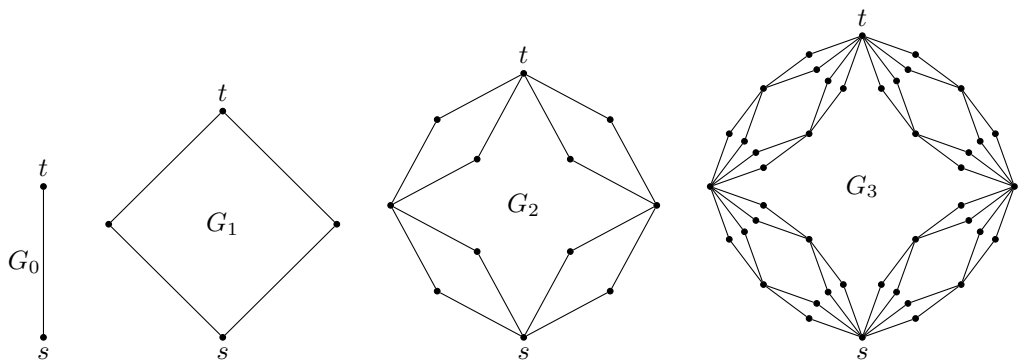

The following lemma entails the upper bound for $f(k), k\ge 3$.

\begin{lemma}\label{lem:ub}
 For every $k\ge 3$, there is a graph $G=(V,E)$ containing two vertices $s,t\in V$ with $k$
 edge-disjoint $s$--$t$ paths,
 but no two vertices $u,v\in V$ with 4 independent $u$--$v$
 paths.
\end{lemma}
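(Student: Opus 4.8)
The plan is to use the recursive diamond graph $G_p$ with $p=\lceil \log_2 k\rceil$ as the witness, and to establish two things: that $G_p$ contains at least $k$ edge-disjoint $s$--$t$ paths, and that no two vertices of $G_p$ are joined by $4$ independent paths. For the first I would show by induction that $G_p$ has $2^p$ edge-disjoint $s$--$t$ paths. Writing $G_p$ as the diamond on $\{s,\hat p,\hat q,t\}$ in which each of the four edges is replaced by a copy of $G_{p-1}$, the two copies along the route $s$--$\hat p$--$t$ together carry $2^{p-1}$ edge-disjoint paths (they meet only in $\hat p$), the two copies along $s$--$\hat q$--$t$ carry another $2^{p-1}$, and the two bundles share no edge; hence $2^p$ in total. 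Since $2^p\ge k$, taking any $k$ of them gives the required edge-disjoint $s$--$t$ paths.

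The real content is the second fact, and I would first reduce it to non-adjacent pairs. Every edge of $G_p$ is created in the final subdivision round, so it joins one of the newest vertices, which has degree exactly $2$, to an older vertex; hence any two adjacent vertices have an endpoint of degree $2$ and are joined by at most $2$ independent paths. For non-adjacent $u,v$, Menger's theorem replaces ``at most $3$ independent $u$--$v$ paths'' by ``there is a set of at most $3$ vertices separating $u$ from $v$'', so it suffices to exhibit such a separator.

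To build the separator I would exploit the recursive structure: call each copy of some $G_m$ occurring in the construction a \emph{cell}, with its two \emph{terminals} and, one level down, its two \emph{middle} vertices; a cell meets the rest of $G_p$ only in its terminals, and deleting a cell's two terminals splits its interior into a $\hat p$-side and a $\hat q$-side. Fix a cell $Z$ of least order whose closure contains both $u$ and $v$; by minimality $u,v$ do not both lie in the closure of a single subcell of $Z$. If one of $u,v$ lies in the interior of a subcell $Z'$, then deleting the two terminals of $Z'$ disconnects that interior from all of $G_p$ and hence separates $u$ from $v$ with a cut of size $2$. Otherwise $u,v$ are both among the four skeleton vertices of $Z$, and minimality forces them to be the two opposite middle vertices or the two opposite terminals. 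In the former case, deleting the two terminals of $Z$ separates them, again a cut of size $2$.

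The main obstacle is the remaining case, where $u,v$ are the two terminals $x,y$ of a proper cell $Z$: the two $x$--$y$ paths running through the two middles inside $Z$ can now be supplemented by paths leaving $Z$ entirely, and I must show these add only one further independent path. Here $x$ and $y$ are, respectively, a terminal and a middle vertex of the parent cell $Z^{+}$, so every $x$--$y$ path that avoids the interior of $Z$ must reach the endpoint that is a middle of $Z^{+}$ through the unique sibling cell incident to it, and therefore must pass through the single vertex $w$ that is the other terminal of $Z^{+}$. Thus all paths outside $Z$ share the interior vertex $w$, and the two middles of $Z$ together with $w$ form a separator of size $3$ (equivalently, $Z$ yields $2$ independent $x$--$y$ paths inside and the outside yields at most $1$; if $Z$ is the whole graph the outside is empty and only $2$ remain). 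This produces a separator of size at most $3$ in every case, bounding the number of independent paths in $G_p$ by $3$ throughout and completing the proof.
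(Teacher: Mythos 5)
Your proposal is correct, and its overall strategy is the paper's: the witness is $G_p$ with $p=\lceil\log_2 k\rceil$, adjacent pairs are dismissed because every edge of $G_p$ has a degree-$2$ endpoint, and for non-adjacent $u,v$ one exhibits a vertex cut of size at most $3$ built from the minimal cell (copy of some $G_m$) containing both vertices, then invokes Menger. Your trichotomy (one vertex interior to a subcell; two opposite middles; the two terminals) is exactly the paper's case analysis, and your induction for the $2^p$ edge-disjoint $s$--$t$ paths just spells out what the paper asserts.

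Where you genuinely diverge is the last, hardest case, and there your treatment is in fact more careful than the paper's. When $u,v$ are the two terminals of a proper minimal cell $Z$ of order $q$, the paper takes the two middles of $Z$ and adds ``the other extremity of $J$,'' where $J$ is \emph{another} copy of $G_q$, edge-disjoint from $Z$, having $u$ as an extremity. But such a $J$ need not be unique, and when it is not, no choice of $J$ works: if, say, $u=s$ and $q\le p-2$, then $u$ is an extremity of $2^{p-q}-1\ge 3$ such sibling copies. Concretely, in $G_3$ take $u=s$ and $v$ a vertex that in $G_2$ is a newly created neighbor of $s$; the minimal cell $Z$ is the order-$1$ cell with terminals $s$ and $v$, and whichever sibling of $s$ the paper's recipe picks, a path from $s$ through the opposite half of the graph can re-enter at $v$'s \emph{other} neighboring cell and avoid the three chosen vertices. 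The only correct third vertex is the other terminal of $Z$'s parent cell, reached through the unique sibling cell incident to the terminal of $Z$ that is a \emph{middle} of the parent --- which is precisely your $w$. Your observation that one endpoint of $Z$ is always such a middle (equivalently, every edge of the quotient graph $G_{p-q}$ has a degree-$2$ endpoint), and that all paths avoiding the interior of $Z$ must funnel through $w$, is the step the paper's wording glosses over and, read literally, gets wrong. So: same architecture, but your resolution of the critical case is the sound one, and anyone repairing the paper's proof should adopt it.
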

\begin{proof}
Consider the diamond graph
$G=G_p$ of order $p = \lceil \log k \rceil$.
$G$ has $2^p \ge k$ edge-disjoint $s$--$t$ paths.
Let $u,v$ be any two vertices in $G$. We will show that there are at
most 3 independent $u$--$v$ paths.

Observe that each recursive diamond graph $G_r$ contains $4$ edge-disjoint copies of $G_{r-1}$.
The \emph{extremities} of $G_r$ are the vertices $s$ and $t$, and the \emph{extremities} of a subgraph $H$ of $G_r$
that is isomorphic to $G_{r'}, r'<r$, are the two vertices from $H$ whose neighborhoods in $G_r$ are not a subset of $V(H)$.

Let $Q$ be the smallest vertex set containing $u$ and $v$ such that
$G[Q]$ is a recursive diamond graph. Let $q$ be the order of the recursive diamond graph $G[Q]$.

If $q=0$, then $u v$ is an edge in $G$, and either $u$ or $v$ has degree $2$.
But then, the number of independent $u$--$v$ paths in $G$ is at most 2 since
independent paths pass through distinct neighbors of $u$ and $v$.

If $q>0$, then $u v$ is not an edge in $G$.
Decompose $G[Q]$ into 4 edge-disjoint graphs $H_1, \dots, H_4$
isomorphic to $G_{q-1}$
such that $u\in V(H_1)$ and the $H_i$ are ordered cyclically by their index
(\ie, $V(H_1) \cap V(H_3) = \emptyset$).
Since we chose $Q$ to be minimum, $u$ and $v$ do not belong to the same $H_i, 1\le i\le 4$.
If $u \notin V(H_2) \cup V(H_4)$, then
the extremities of $H_1$ are a $u$--$v$-vertex cut of size $2$ in $G[Q]$ and in $G$.
Otherwise, suppose, without loss of generality, that $u\in V(H_1) \cap V(H_2)$.
Since $v \notin V(H_1) \cup V(H_2)$, the other two extremities of $H_1$ and $H_2$ form a
$u$--$v$-vertex cut $C$ of size $2$ in $G[Q]$. The set $C$ is also a $u$--$v$-vertex cut in $G$,
unless $q<p$ and $u$ is an extremity of another subgraph $J$ of $G$ isomorphic to $G_q$ that is edge-disjoint from $G[Q]$.
In the latter case, add the other extremity of $J$ to $C$ to obtain a $u$--$v$-vertex cut in $G$ of size~$3$.


Since $G$ has a $u$--$v$-vertex cut of size at most $3$, by Menger's theorem \cite{Menger27}, there are
at most 3 independent $u$--$v$ paths in $G$.
\end{proof}

\paragraph{An application}
Lemma \ref{lem:lb} has been used in an algorithm \cite{GaspersS12} for the detection of backdoor sets
to ease Satisfiability solving. A backdoor set of a propositional formula is a set of variables
such that assigning truth values to the variables in the backdoor set moves the formula into a
polynomial-time decidable class; see \cite{GaspersSzeider11festschrift} for a survey.
The class of nested formulas was introduced by Knuth \cite{Knuth90} and their satisfiability can be decided
in polynomial time.
To find a backdoor set to the class of nested formulas, the algorithm from \cite{GaspersS12}
considers the clause-variable incidence graph of the formula.
If the formula is nested, this graph does not contain a $K_{2,3}$-minor with the additional property that
the independent set of size $3$ is obtained by contracting $3$ connected subgraphs containing a variable each.
In the correctness proof of the algorithm it is shown that in certain cases the formula does not have a small backdoor set.
This is shown by exhibiting
two vertices $u,v$ and 3 independent $u$--$v$ paths in an auxiliary graph using Lemma \ref{lem:lb}.
Expanding these edges to the paths they represent in the formula's incident graph
gives rise to a $K_{2,3}$-minor with the desired property.

On the other hand, Lemma \ref{lem:ub} shows the limitations
of this approach if we would like to enlarge the target class to more general formulas.

\paragraph{Acknowledgment}
We thank Chandra Chekuri for bringing the recursive diamond graphs to our attention \cite{Gaspers12SE},
and we thank Herbert Fleischner for valuable discussions on an earlier version of this note.

\bibliographystyle{plainnat}
\bibliography{literature}

\end{document}